\newtheorem{theorem}{Theorem}[section]
\newtheorem{lemma}[theorem]{Lemma}
\newtheorem{proposition}[theorem]{Proposition}
\newtheorem{corollary}[theorem]{Corollary}
\newcommand{\lin}{\operatorname{lin}}
\newcommand{\gr}{\operatorname{gr}}
\newcommand{\Sym}{\operatorname{Sym}}
\newenvironment{proof}{\par\noindent{\bf Proof.}}{$\qed$\par\bigskip}
\newcommand{\qed}{\enspace\vrule  height6pt  width4pt  depth2pt}
\begin{document}
\setcounter{page}{0} \thispagestyle{empty}

\title{Finitely presented algebras and groups defined by permutation relations \thanks{
Research  partially supported by grants of
MICIN-FEDER (Spain) MTM2008-06201-C02-01,
Generalitat de Catalunya 2005SGR00206,
Onderzoeksraad of Vrije Universiteit Brussel, Fonds
voor Wetenschappelijk Onderzoek (Belgium),
Flemish-Polish bilateral agreement BIL2005/VUB/06
and MNiSW research grant N201 004 32/0088
(Poland)}.}

\author{Ferran Ced\'o \and Eric Jespers \and Jan Okni\'nski}
\date{}
\maketitle

\begin{abstract}
The class of finitely presented algebras over a
field $K$ with a set of generators $a_{1},\ldots ,
a_{n}$ and defined by homogeneous relations of the
form
 $a_{1}a_{2}\cdots a_{n} =a_{\sigma (a)} a_{\sigma (2)} \cdots
 a_{\sigma (n)}$,
where $\sigma$ runs through a subset $H$ of the
symmetric group $\Sym_{n}$ of degree $n$, is
introduced. The emphasis is on the case of a cyclic
subgroup $H$ of $\Sym_{n}$ of order $n$. A normal
form of elements of the algebra is obtained. It is
shown that the underlying monoid, defined by the
same (monoid) presentation, has a group of fractions
and this group is described. Properties of the
algebra are derived. In particular, it follows that
the algebra is a semiprimitive domain. Problems
concerning the groups and algebras defined by
arbitrary subgroups $H$ of $\Sym_{n}$ are proposed.
\end{abstract}

{\it keywords: symmetric presentation, finitely
presented, semigroup algebra, monoid, group}

\section{Introduction}
Recently there has been extensive interest in some
finitely presented algebras $A$ defined by
homogeneous semigroup relations, that is, relations
of the form $w=v$, where $w$ and $v$ are words of
the same length in a generating set of the algebra.
A particular intriguing class (defined by
homogeneous relations of degree $2$) are the
algebras yielding set theoretic solutions of the
Yang-Baxter equation,
\cite{eting,gat-sol,gat-jes-okn,gat-van,jes-okn-Itype,
bookspringer,rump}. Such algebras do not only have
nice ring theoretic properties but they also lead to
exciting groups $G$ and monoids $S$ defined by the
same presentation. Other classes of algebras in this
context and defined by homogeneous relations of
degree $3$ are Chinese \cite{chinese,jaszu-okn} and
plactic algebras \cite{plactic,las-lec}. The origin
for studying these comes from aspects in Young
diagrams, representation theory and algebraic
combinatorics, see for example \cite{fulton}. In all
the mentioned algebras there is a strong connection
between the structure of the algebra $A$ and the
underlying semigroup $S$ and group $G$. The study of
$S$ and $G$ has often led to a successful approach
in the investigations of the algebraic structure of
$A$. For example, the structure of primes of $S$ and
primes of $A$ is related and the minimal primes of
$A$ have an unexpectedly nice structure. Also,
information on the prime and the Jacobson radicals
of $A$ can be deduced.

The aim of this paper is to introduce and
investigate a new class of finitely presented
algebras over a field $K$ with a set of generators
$a_{1},\ldots , a_{n}$ satisfying homogeneous
relations of the form
 \begin{eqnarray} a_{1}a_{2}\cdots a_{n} &=&a_{\sigma (a)} a_{\sigma (2)} \cdots
 a_{\sigma (n)}, \label{present}
 \end{eqnarray}
where $\sigma$ runs through a subset $H$ of the
symmetric group $\Sym_{n}$ of degree $n$. Note that
such an algebra  is  the semigroup algebra
$K[S_{n}(H)]$, where $S_{n}(H)$ is the monoid
defined by the same presentation. By $G_{n}(H)$ we
denote the group defined by this presentation. We
initiate a study of combinatorial and algebraic
aspects of the algebras $K[S_{n}(H)]$ and of the
associated monoids $S_{n}(H)$ and groups $G_{n}(H)$.
Note that various other definitions and examples of
symmetric presentations of groups have been
considered in the literature, see for example
\cite{curtis} and \cite{johnson-odoni}.

We mention two easy examples: $K[S_{n}(\{
1\})]=K\langle a_{1},\ldots , a_{n}\rangle$, the
free $K$-algebra, $S_{n}(\{ 1 \})=FM_{n}$ the free
monoid of rank $n$ and
$K[S_{2}(\Sym_{2})]=K[a_{1},a_{2}]$, the commutative
polynomial algebra in two variables. In
Proposition~\ref{symmetriccase} we prove that the
latter can be extended as follows:
$K[S_{n}(\Sym_{n})]$ is a subdirect product of a
commutative polynomial algebra in $n$ variables and
a primitive monomial algebra. A proof for this
relies on a recent result of Bell and Pekcagliyan
\cite{bell}, see also \cite{okn_monomial}. Such
monomial algebras are of independent interest and
they are studied, for example, by Okni\'{n}ski
\cite{book}, Bell and Smoktunowicz \cite{bellsmokt}
and  Belov, Borisenko and Latyshev \cite{belov}. For
other interesting classes $K[S]$ of algebras defined
by homogeneous semigroup presentations, the study of
minimal prime ideals naturally leads to the ideals
intersecting $S$ and the ideals of $K[S/\rho]$,
where $\rho$ is the least cancellative congruence on
$S$, and therefore in many cases to ideals of the
group algebra $K[G]$, where $G$ is the group of
fractions of $S/\rho$. This approach has proved
quite successful in many classes of such algebras
and the results in this paper indicate that this
might also be the case for general $K[S_{n}(H)]$,
that is, with $H$ a nontrivial proper subset of
$\Sym_{n}$. The most promising case seems to be when
$H$ is a subgroup of $\Sym_{n}$.

We will give a detailed account in case $H$ is the
cyclic group generated by the cycle $(1,2,\ldots
,n)\in \Sym_{n}$. We describe the associated group
$G_{n}(H)$ and the monoid $S_{n}(H)$. In particular,
we obtain a normal form for the elements of the
algebra $K[S_{n}(H)]$. Further, we derive some
structural properties of this algebra. In
particular, we show that it is a semiprimitive
domain. We conclude with a list of problems
concerning the general case of groups $G_{n}(H)$ and
algebras $K[S_{n}(H)]$, with $H$ an arbitrary
subgroup of $\Sym_{n}$. These are on one hand
motivated by the results of this paper and on the
other hand by the nice behavior of the prime ideals
and of the radical in some other interesting classes
of finitely presented algebras defined by
homogeneous relations,
\cite{gat-jes-okn,jaszu-okn,jes-okn-Itype}. The
latter share the flavor of semigroup algebras $K[S]$
of semigroups $S$ that satisfy a permutational
identity (a much stronger requirement than the one
considered in this paper). For results on these we
refer for example to \cite{nordahl,book,putcha}. Our
results might indicate that the considered algebras
often lead to the following two constructions, that
are of independent interest: the algebras $K[M/\rho
]$, where $\rho $ is the least cancellative
congruence on $M$ and the monomial algebras.

Throughout the paper $K$ is a field.  If
$b_{1},\ldots, b_{m}$ are elements of a monoid $M$
then we denote by $\langle b_{1},\ldots ,
b_{m}\rangle$ the submonoid generated by
$b_{1},\ldots , b_{m}$. If $M$ is a group then $\gr
(b_{1},\ldots ,b_{m})$ denotes the subgroup of $M$
generated by $b_{1},\ldots, b_{m}$.

Clearly, the defining relations of an arbitrary
$S_{n}(H)$ are homogeneous. Hence, it has a natural
degree or length function. This will be used freely
throughout the paper.

\section{The case of a cyclic group of order $n$
}\label{example}

As mentioned in the introduction, if $n=2$ then
$S_{2}(H)$ is the free monoid if $H=\{ 1\}$ and
otherwise $S_{2}(H)$ is the free abelian monoid of
rank $2$. So, from now on, we assume that $n\geq 3$.
In this section we study $S_{n}(H)$ in case $H$ is
the cyclic subgroup of $\Sym_{n}$ generated by the
cycle $(1\, , 2\, , \ldots ,n)$. For simplicity, we
denote $S_{n}(\gr (\{ (1\, ,2\, , \ldots ,n)\} ))$
by $S_{n}$.

First, we obtain a normal form for the elements of
$S_{n}$. This allows us to show next that $S_{n}$ is
a submonoid of a group, actually it has a group of
fractions which is obtained by localizing at a
central infinite cyclic submonoid.

So,  throughout the paper, $S_{n}$ is the monoid
defined by generators $a_1,a_2,\dots ,a_n$ and
relations $$a_1a_2\cdots a_n=a_{\sigma (1)}a_{\sigma
(2)}\cdots a_{\sigma (n)},$$ where
$\sigma=(1,2,\dots ,n)^{i}\in \Sym_n$ and
$i=1,2,\dots ,n$.

 We first investigate the structure of the monoid
$S_{n}$. We begin by noting that $a_1a_2\cdots a_n$
is a central element of $S_n$. Indeed, for all
$1\leq i\leq n-1$, $$(a_1a_2\cdots
a_n)a_i=a_ia_{i+1}\cdots a_na_1\cdots
a_i=a_i(a_1a_2\cdots a_n),$$ and $a_na_1a_2\cdots
a_{n-1}a_n=(a_1a_2\cdots a_n)a_n$.

\begin{theorem}\label{normalform}
Each element $a\in S_n$ can be uniquely written as a
product
\begin{eqnarray}\label{nf}
a=a_1^{i}(a_1a_2\cdots a_n)^{\varepsilon}(a_2\cdots
a_n)^jb,
\end{eqnarray}
where  $\varepsilon\in\{ 0,1\}$, $b\in S_n\setminus
(a_1S_n\cup a_2\cdots a_nS_n)$ and $i,j$ are
non-negative integers so that the following property
holds  $$j\geq 1\Rightarrow \varepsilon =1\;\mbox{
or }\; i=0.$$
\end{theorem}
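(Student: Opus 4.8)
The strategy is to establish a rewriting system on the free monoid $FM_n = \langle a_1,\dots,a_n\rangle$ whose set of normal forms is exactly the set of words described in \eqref{nf}, and then show this system is confluent and terminating, so that each element of $S_n$ has a unique normal form. The first task is the \emph{existence} part: given any word $w$ in $a_1,\dots,a_n$, I would show it can be rewritten to the claimed shape. The defining relations let us replace any occurrence of $a_1a_2\cdots a_n$ by any cyclic rotation $a_i a_{i+1}\cdots a_n a_1\cdots a_{i-1}$, and (using centrality of $c:=a_1a_2\cdots a_n$, already established before the theorem) we may freely move copies of $c$ to the left. The idea is to repeatedly extract leading factors: pull out as many $a_1$'s as possible to the front, then, whenever the remaining word begins with $a_2\cdots a_n$, extract that factor (recording it as a power of $a_2\cdots a_n$, again moved past nothing problematic since what precedes is a power of $a_1$ or of $c$). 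One must check the interaction with the side condition: if we have already collected at least one factor $a_2\cdots a_n$ and we later produce a new leading $a_1$, we instead absorb $a_1(a_2\cdots a_n) = c$ into an $\varepsilon$-factor; this is precisely why the constraint ``$j\geq 1 \Rightarrow \varepsilon = 1$ or $i=0$'' appears. Termination of this extraction follows from a suitable weight/length argument (the bracketed factors have bounded total ``complexity'' and the tail $b$ strictly shrinks).

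For \emph{uniqueness}, the clean approach is to exhibit a concrete monoid (or a faithful action) in which the images of the normal-form words are manifestly distinct, so that no two distinct normal forms can be equal in $S_n$. A natural candidate is an action of $FM_n$ on a set of tableaux-like data, or a direct combinatorial invariant read off from a word: the multidegree (the vector of exponents of each $a_k$) is preserved by the relations and already pins down $i$, $j$, $\varepsilon$ once we know the multidegree of $b$ together with the fact that $b \notin a_1 S_n \cup a_2\cdots a_n S_n$. So the heart of uniqueness reduces to: (a) the multidegree determines $(i,\varepsilon,j)$ given the ``tail'' $b$, and (b) two words $b, b'$ lying outside $a_1 S_n \cup a_2\cdots a_n S_n$ that are equal in $S_n$ must be equal in $FM_n$ — equivalently, no defining relation can be applied to such a $b$ in a way that changes it, which one argues by noting that applying a relation to $b$ either requires $b$ to contain the subword $a_1a_2\cdots a_n$ (forcing, after analysis, a leading $a_1$ or leading $a_2\cdots a_n$, contradiction) or produces such a subword-creating pattern. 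This last point needs care because relations can be applied in the \emph{middle} of $b$.

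Concretely, I would organize the confluence check via the diamond lemma (Bergman): list the overlap ambiguities among the relations $a_1a_2\cdots a_n \to a_i a_{i+1}\cdots a_{i-1}$ — all overlaps occur when one copy of the word $a_1\cdots a_n$ overlaps a cyclic rotation of itself — and verify each resolves. Because all relations have the same length, termination requires a well-order refining length (e.g., a length-lexicographic order after fixing a rotation as the ``reduced'' side, combined with pulling $c$ and $a_1$ leftward); one must pick the order so that the normal forms of \eqref{nf} are exactly the irreducible words. The main obstacle I anticipate is exactly this bookkeeping: the relations are symmetric (each rotation can be rewritten to any other), so naively the system is not terminating, and one must break the symmetry carefully — designating, say, $a_1 a_2 \cdots a_n$ together with the ``absorbed'' forms $a_1^i c^\varepsilon (a_2\cdots a_n)^j$ as preferred — and then check that the side condition is forced by confluence rather than imposed ad hoc. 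Once the rewriting system is set up correctly, resolving the finitely many ambiguities is routine but the correct setup is the crux.
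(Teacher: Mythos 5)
Your outline follows the same overall strategy as the paper: set up a rewriting system on $FM_n$, verify termination and confluence via the Diamond Lemma, and read off the normal forms as the irreducible words. But as written the proposal has a genuine gap, and you in fact point at it yourself when you say that ``the correct setup is the crux'': the rewriting system is never actually specified, and specifying it is where essentially all of the content of the proof lies. The paper's system consists not only of the oriented rotations $t_i(a_{i+1}\cdots a_na_1\cdots a_i)=a_1\cdots a_n$ (which handle termination painlessly, since every rule strictly decreases the length-lexicographic order with $a_1\ll\cdots\ll a_n$ -- so your worry about non-termination of the symmetric system is resolved simply by orienting everything toward a leading $a_1$), but also of auxiliary rules $r_{j,m}(a_ja_1^ma_2\cdots a_n)=a_1\cdots a_n\,a_ja_1^{m-1}$ for $2\le j\le n-1$, $m\ge 1$, and $r_{n,m}$ only for $m\ge 2$. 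These derived rules (consequences of the centrality of $a_1\cdots a_n$) are indispensable: with the rotations alone the system is not confluent (e.g.\ $a_2a_1a_2\cdots a_n$ and $a_1a_2\cdots a_na_2$ are both irreducible under the $t_i$ yet equal in $S_n$), and it is precisely the asymmetric restriction $m\ge 2$ in $r_{n,m}$ that forces the factor $(a_1\cdots a_n)^{\varepsilon}$ and the side condition $j\ge 1\Rightarrow\varepsilon=1$ or $i=0$ to appear in the normal form, rather than these being ``forced by confluence'' of a system you have yet to write down. Moreover the overlap analysis is not quite ``routine'': the rules form infinite families indexed by $m$, and several overlap resolutions require chains of five or six rewrites.

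A second, more local problem is in your uniqueness step (b). You propose to show that two tails $b,b'$ outside $a_1S_n\cup a_2\cdots a_nS_n$ that are equal in $S_n$ are equal as words, by arguing that no relation applies to such a $b$. But membership in $a_1S_n\cup a_2\cdots a_nS_n$ is a condition on the element of $S_n$, not on a chosen word representing it, so this argument is circular: deciding whether $b\in a_1S_n$ already requires understanding equality in $S_n$ (for instance the word $a_3a_4\cdots a_na_1a_2$ begins with neither $a_1$ nor $a_2\cdots a_n$, yet represents an element of $a_1S_n$). The paper avoids this by characterizing the irreducible words purely syntactically -- $w\in FM_n\setminus(I\cup a_1FM_n\cup a_2\cdots a_nFM_n)$, where $I$ is the ideal of $FM_n$ generated by \emph{all} cyclic rotations of $a_1\cdots a_n$, so that no rotation occurs as a subword anywhere in $w$, not merely as a prefix -- and only afterwards translates this into the statement of the theorem via $\pi(I)=a_1\cdots a_nS_n$. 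Your multidegree observation for recovering $(i,\varepsilon,j)$ from the tail is fine, but it is the minor part of uniqueness.
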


\begin{proof}
Consider the following transformations of words in
the free monoid $FM_n$ on $\{ a_1,a_2,\dots ,a_n\}$
(we use the same notation for the generators of
$FM_{n}$ and those of $S_{n}$):
\begin{eqnarray}\label{ti}
t_i(a_{i+1}\cdots a_na_1a_2\cdots a_i)=a_1a_2\cdots
a_n,
\end{eqnarray}
for $i=1,2,\dots ,n-1$,
\begin{eqnarray}\label{rjm}
r_{j,m}(a_{j}a_1^{m}a_2\cdots a_n)=a_1a_2\cdots
a_na_ja_1^{m-1},
\end{eqnarray}
for $j=2,\dots ,n-1$ and $m\geq 1$, and
\begin{eqnarray}\label{rmn}
r_{n,m}(a_{n}a_1^{m}a_2\cdots a_n)=a_1a_2\cdots
a_na_na_1^{m-1},
\end{eqnarray}
for $m\geq 2$.

By the comment before the theorem, each of these
transformations maps a word in $FM_{n}$ to another
word that represents the same element in $S_{n}$.

Let $w_1,w_2\in FM_n$. We say that $w_1$ covers
$w_2$ if $w_2$ is obtained from $w_1$ by applying
exactly one of the transformations $t_i,r_{k,m}$ to
a subword of $w_1$. In this case we write $w_1\succ
w_2$. We define a preorder in $FM_n$ by $w\geq w'$
if and only if there exist $w_0,w_1,\dots ,w_r\in
FM_n$, ($r\geq 0$), such that $$w=w_0\succ
w_1\succ\dots\succ w_r=w'.$$ We will see that this
preorder satisfies conditions (i) and (ii) of the
Diamond lemma \cite[Theorem~I.4.9]{Cohn}:
\begin{itemize}
\item[{\rm (i)}] Denote by $\ll$ the length-lexicographical order
on $FM_n$ generated by $$a_1\ll a_2\ll\dots\ll
a_n.$$ Note that for $w,w'\in FM_n$, $$w\geq
w'\Rightarrow w\gg w'.$$ Hence, for each $w\in
FM_n$, there exists a positive integer $k(w)$ such
that every descending chain starting with $w$,
$$w=w_0\geq w_1\geq\dots$$ where $w_{i-1}\neq w_i$,
has at most $k(w)$ terms. (In fact $k(w)\leq m!$,
where $m$ is the length of $w$ because the
transformations (\ref{ti}),(\ref{rjm}),(\ref{rmn})
are permutations.)

\item[{\rm (ii)}] Let $w,w_1,w_2\in FM_n$ such that $w\succ w_1$
and $w\succ w_2$. We should show that there exists
$w_{3}\in FM_{n}$ such that $w_{1},w_{2}\geq w_{3}$.
We know that $w=a_{i_1}a_{i_2}\cdots a_{i_m}$,
$$w_1=a_{i_1}\cdots a_{i_s}f(a_{i_{s+1}}\cdots
a_{i_{s+p}})a_{i_{s+p+1}}\cdots a_{i_m}$$ and
$$w_2=a_{i_1}\cdots a_{i_u}g(a_{i_{u+1}}\cdots
a_{i_{u+q}})a_{i_{u+q+1}}\cdots a_{i_m},$$ where
$a_{i_1},\dots ,a_{i_m}\in \{a_1,\dots ,a_n\}$ and
$f,g\in \{t_i\mid 1\leq i\leq n-1\}\cup\{
r_{j,m}\mid 2\leq j\leq n-1$ and $m\geq 1\}\cup \{
r_{n,m}\mid m\geq 2\}$.

Note that if $s+p<u+1$ or $u+q<s+1$, then applying
$g$ to the subword $a_{i_{u+1}}\cdots a_{i_{u+q}}$
of $w_1$ and $f$ to the subword $a_{i_{s+1}}\cdots
a_{i_{s+p}}$ of $w_2$ we obtain the same word $w_3$
and $$w_1\succ w_3\quad\mbox{and}\quad w_2\succ
w_3.$$

If $s+p\geq u+1$ and $u+q\geq s+1$, we say that the
subwords $a_{i_{s+1}}\cdots a_{i_{s+p}}$ and
$a_{i_{u+1}}\cdots a_{i_{u+q}}$ of $w$ overlap. We
will  study all possible overlaps between subwords
of the forms
\begin{itemize}
\item[$(\alpha)$] $a_{i+1}\cdots a_na_1\cdots a_i$ for $1\leq
i\leq n-1$,

\item[$(\beta)$] $a_{j}a_1^{m}a_2\cdots a_n$ for $2\leq j\leq n-1$
and $m\geq 1$,

\item[$(\gamma)$] $a_{n}a_1^{m}a_2\cdots a_n$ for $m\geq 2$.
\end{itemize}
Note that overlaps between two different subwords of
the form $(\beta)$ cannot occur.

{\it Case 1:} Overlaps between two subwords of the
form $(\alpha)$.

In this case we may assume that $1\leq i<k\leq n-1$,
$$w=a_{i+1}\cdots a_{k+1}\cdots a_na_1\cdots
a_i\cdots a_k,$$ $$w_1=a_{1}a_2\cdots
a_na_{i+1}\cdots a_k\quad\mbox{and}\quad
w_2=a_{i+1}\cdots a_{k}a_1a_2\cdots a_n.$$ Now we
have (applying $r_{k,1},r_{k-1,1},\dots ,r_{i+1,1}$)
\begin{eqnarray*}
w_2&=&a_{i+1}\cdots a_{k}a_1a_2\cdots a_n\succ
a_{i+1}\cdots a_{k-1}a_1a_2\cdots a_na_k\\ &\succ
&\dots\succ a_{i+1}a_1a_2\cdots a_na_{i+2}\cdots
a_{k}\\ &\succ & a_1a_2\cdots a_na_{i+1}\cdots
a_{k}=w_1.
\end{eqnarray*}

{\it Case 2:} Overlaps between a subword of the form
$(\alpha)$ and a subword of the form $(\beta)$.

Assume first that $2\leq j\leq n-1$, $m\geq 1$,
$$w=a_ja_1^ma_2\cdots a_{n}a_1,$$
$$w_1=a_{1}a_2\cdots
a_na_{j}a_1^{m-1}a_1\quad\mbox{and}\quad
w_2=a_ja_1^ma_1a_2\cdots a_n.$$ In this case
$w_2\succ w_1$ (applying $r_{j,m+1}$).

Assume now that $2\leq j\leq n-1$, $2\leq i\leq
n-1$, $m\geq 1$, $$w=a_ja_1^ma_2\cdots
a_{n}a_1\cdots a_i,$$ $$w_1=a_{1}a_2\cdots
a_na_{j}a_1^{m-1}a_1\cdots a_i\quad\mbox{and}\quad
w_2=a_ja_1^ma_2\cdots a_{i}a_1a_2\cdots a_n.$$ In
this case we have
\begin{eqnarray*}
w_2&=&a_ja_1^ma_2\cdots a_{i}a_1a_2\cdots a_n\succ
a_ja_1^ma_2\cdots a_{i-1}a_1a_2\cdots a_na_i\\
&\succ&\dots\succ a_ja_1^ma_1a_2\cdots a_na_2\cdots
a_i\\ &\succ& a_1a_2\cdots a_na_ja_1^{m}a_2\cdots
a_i=w_1
\end{eqnarray*}
(applying $r_{i,1},r_{i-1,1},\dots
,r_{2,1},r_{j,m+1}$).

Assume now that $2\leq j\leq n-1$, $m\geq 1$,
$$w=a_{j+1}\cdots a_na_1a_2\cdots
a_{j}a_1^{m}a_2\cdots a_n,$$ $$w_1=a_{1}a_2\cdots
a_na_1^{m}a_2\cdots a_n$$ and $$w_2=a_{j+1}\cdots
a_{n}a_1\cdots a_{j-1}a_1a_2\cdots
a_na_ja_1^{m-1}.$$ In this case we have (applying
$t_{n-1}$, if $m=1$, or $r_{n,m}$, if $m\geq 2$)
\begin{eqnarray*}
w_1=a_{1}a_2\cdots a_na_1^{m}a_2\cdots a_n\succ
a_{1}a_2\cdots a_{n-1}a_1a_2\cdots a_na_na_1^{m-1},
\end{eqnarray*}
and (applying $r_{j-1,1},\dots
,r_{2,1},r_{n,2},r_{n-1,1},\dots ,r_{j+1,1}, t_{j},
t_{n-1}$)
\begin{eqnarray*}
w_2&=&a_{j+1}\cdots a_{n}a_1\cdots
a_{j-1}a_1a_2\cdots a_na_ja_1^{m-1}\\
&\succ&\dots\succ a_{j+1}\cdots a_na_1^2a_2\cdots
a_na_2\cdots a_ja_1^{m-1}\\ &\succ& a_{j+1}\cdots
a_{n-1}a_1a_2\cdots a_na_na_1a_2\cdots
a_ja_1^{m-1}\\ &\succ&\dots\succ a_1a_2\cdots
a_na_{j+1}\cdots a_na_1\cdots a_ja_1^{m-1}\\ &\succ&
a_1a_2\cdots a_na_1a_2\cdots a_na_1^{m-1}\\ &\succ&
a_1a_2\cdots a_{n-1}a_1a_2\cdots a_na_na_1^{m-1}.
\end{eqnarray*}
Hence, in this case,
 $$w_1\geq a_1a_2\cdots
a_{n-1}a_1a_2\cdots a_na_na_1^{m-1}$$ and $$ w_2\geq
a_1a_2\cdots a_{n-1}a_1a_2\cdots a_na_na_1^{m-1}.$$

{\it Case 3:} Overlaps between a subword of the form
$(\alpha)$ and a subword of the form $(\gamma)$.

Assume first that $m\geq 2$, $$w=a_na_1^ma_2\cdots
a_{n}a_1,$$ $$w_1=a_{1}a_2\cdots
a_na_{n}a_1^{m-1}a_1\quad\mbox{and}\quad
w_2=a_na_1^ma_1a_2\cdots a_n.$$ In this case
$w_2\succ w_1$ (applying $r_{n,m+1}$).

Assume now that $2\leq i\leq n-1$, $m\geq 2$,
$$w=a_na_1^ma_2\cdots a_{n}a_1\cdots a_i,$$
$$w_1=a_{1}a_2\cdots a_na_{n}a_1^{m-1}a_1\cdots
a_i\quad\mbox{and}\quad w_2=a_na_1^ma_2\cdots
a_{i}a_1a_2\cdots a_n.$$ In this case we have
(applying $r_{i,1},r_{i-1,1},\dots
,r_{2,1},r_{n,m+1}$)
\begin{eqnarray*}
w_2&=&a_na_1^ma_2\cdots a_{i}a_1a_2\cdots a_n\succ
a_na_1^ma_2\cdots a_{i-1}a_1a_2\cdots a_na_i\\
&\succ&\dots\succ a_na_1^ma_1a_2\cdots a_na_2\cdots
a_i\\ &\succ& a_1a_2\cdots a_na_na_1^{m}a_2\cdots
a_i=w_1.
\end{eqnarray*}

{\it Case 4:} Overlaps between a subword of the form
$(\beta)$ and a subword of the form $(\gamma)$.

In this case we may assume that $1\leq j\leq n-1$,
$m\geq 1$, $p\geq2$, $$w=a_{j}a_1^{m}a_2\cdots
a_{n}a_1^{p}a_2\cdots a_n,$$ $$w_1=a_{1}a_2\cdots
a_na_{j}a_1^{m-1}a_1^{p}a_2\cdots a_n$$ and
$$w_2=a_{j}a_1^{m}a_2\cdots a_{n-1}a_1a_2\cdots
a_na_na_1^{p-1}.$$ Now we have (applying
$r_{j,m+p-1}$)
\begin{eqnarray*}
w_1=a_{1}a_2\cdots a_na_{j}a_1^{m-1}a_1^{p}a_2\cdots
a_n\succ a_{1}a_2\cdots a_na_1a_2\cdots
a_na_{j}a_1^{m+p-2},
\end{eqnarray*}
and (applying $r_{n-1,1},\dots
,r_{2,1},r_{j,m+1},r_{j,m}$)
\begin{eqnarray*}
w_2&=&a_{j}a_1^{m}a_2\cdots a_{n-1}a_1a_2\cdots
a_na_na_1^{p-1}\\ &\succ &a_{j}a_1^{m}a_2\cdots
a_{n-2}a_1a_2\cdots a_na_{n-1}a_na_1^{p-1}\\ &\succ
&\dots\succ a_{j}a_1^{m}a_1a_2\cdots a_na_2\cdots
a_na_1^{p-1}\\ &\succ &a_1a_2\cdots
a_na_{j}a_1^{m}a_2\cdots a_na_1^{p-1}\\ &\succ
&a_1a_2\cdots a_{n}a_1a_2\cdots a_na_{j}a_1^{m+p-2}.
\end{eqnarray*}
Hence $$w_1\geq a_{1}a_2\cdots a_na_1a_2\cdots
a_na_{j}a_1^{m+p-2}$$ and $$w_2\geq a_{1}a_2\cdots
a_na_1a_2\cdots a_na_{j}a_1^{m+p-2}$$ in this case.

{\it Case 5:} Overlaps between two subwords of the
form $(\gamma)$.

In this case we may assume that  $m,p\geq 2$,
$$w=a_{n}a_1^{m}a_2\cdots a_{n}a_1^{p}a_2\cdots
a_n,$$ $$w_1=a_{1}a_2\cdots
a_na_{n}a_1^{m-1}a_1^{p}a_2\cdots a_n$$ and
$$w_2=a_{n}a_1^{m}a_2\cdots a_{n-1}a_1a_2\cdots
a_na_na_1^{p-1}.$$ Similarly as in Case 4, we obtain
that $$w_1\geq a_{1}a_2\cdots a_na_1a_2\cdots
a_na_{n}a_1^{m+p-2}$$ and $$w_2\geq a_{1}a_2\cdots
a_na_1a_2\cdots a_na_{n}a_1^{m+p-2}$$ in this case.
\end{itemize}
Therefore, by the Diamond lemma
\cite[Theorem~I.4.9]{Cohn}, if $\pi \colon
FM_n\longrightarrow S_n$ is the natural projection,
for each $a\in S_n$ there exists a unique $w\in
FM_n$ such that $\pi (w)=a$ and $w$ is a minimal
element in $(FM_n, \leq)$. It is easy to see that
the minimal elements in $(FM_n, \leq)$ are of the
form
\begin{eqnarray}  \label{form}
a_1^{i}(a_1a_2\cdots a_n)^{\varepsilon}(a_2\cdots
a_n)^jw,
\end{eqnarray}
where $i,j$ are non-negative integers,
$\varepsilon\in\{ 0,1\}$ and $w\in FM_n\setminus
(I\cup a_1FM_n\cup a_2\cdots a_nFM_n)$, where $I$ is
the ideal of $FM_n$ generated by $a_{\sigma
(1)}\cdots a_{\sigma (n)}$ for all
$\sigma=(1,2,\dots ,n)^{k}\in \Sym_n$ and
$k=1,2,\dots ,n$. Moreover, if we also assume that
$$j\geq 1\Rightarrow \varepsilon =1\;\mbox{ or }\;
i=0$$ then (\ref{form}) gives a unique presentation
of all minimal elements in $(FM_n, \leq)$. Since
$\pi(I)=a_1a_2\cdots a_nS_n$, the result follows.
\end{proof}

Note that, using the notation in the last part of
the proof of the above theorem, $S_n\setminus
a_1a_2\cdots a_nS_n=\{ a\in S_n\mid
|\pi^{-1}(a)|=1\}$. So, elements in this set have a
unique presentation in $S_{n}$.  In particular, we
have a natural isomorphism $S_{n}/(a_{1}\cdots
a_{n}S_{n})\cong FM_{n}/\pi^{-1} (a_{1}\cdots
a_{n}S_{n})$ (again, we use the same notation for
the generators of $FM_{n}$ and those of $S_{n}$).

\begin{theorem} \label{cancel}
The monoid $S_{n}$ is cancellative and it has a
group $G$ of fractions of the form $G=S_{n}\langle
a_{1}\cdots a_{n}\rangle ^{-1}\cong F\times C$,
where $F=\gr (a_{1},\ldots , a_{n-1})$ is a free
group of rank $n-1$ and $C=\gr (a_{1}\cdots a_{n})$
is a cyclic infinite group.
\end{theorem}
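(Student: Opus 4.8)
The plan is to realise $S_n$ as a submonoid of a group by inverting the central element $z:=a_1a_2\cdots a_n$, and then to recognise the resulting group as $F\times C$.

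First I would check that $z$ is a cancellable element of $S_n$. Since $z$ is central it suffices to show that the map $u\mapsto zu$ is injective, and for this I would use the normal form of Theorem~\ref{normalform}. If $u=a_1^{i}(a_1a_2\cdots a_n)^{\varepsilon}(a_2\cdots a_n)^{j}b$ is the normal form of $u$, then, using that $z$ is central and the identity $z=a_1(a_2\cdots a_n)$, one computes $zu=a_1^{i}z(a_2\cdots a_n)^{j}b$ when $\varepsilon=0$, and $zu=a_1^{i+1}z(a_2\cdots a_n)^{j+1}b$ when $\varepsilon=1$; in both cases the right hand side is again of the form (\ref{nf}) and satisfies the side condition stated there, hence it is the normal form of $zu$. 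The correspondence is visibly injective on each of the two families, and the two families have disjoint images (the outputs coming from $\varepsilon=0$ satisfy $j\geq1\Rightarrow i=0$, while those coming from $\varepsilon=1$ have positive $a_1$-exponent and positive $(a_2\cdots a_n)$-exponent), so distinct $u$ yield distinct $zu$. Consequently the natural map of $S_n$ into the localisation $G:=S_n\langle z\rangle^{-1}$ at the central submonoid $\langle z\rangle$ is injective.

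Next I would show that $G$ is a group, and indeed a group of fractions of $S_n$. For each $i$ the defining relations of $S_n$ give $a_iv_i=v_ia_i=z$, where $v_i:=a_{i+1}\cdots a_na_1\cdots a_{i-1}$; indeed these two equalities are just the equalities of $z$ with its cyclic shifts by $i-1$ and by $i$. Hence in $G$ every $a_i$ is invertible, with $a_i^{-1}=v_iz^{-1}=z^{-1}v_i$, and since $G$ is generated as a monoid by $a_1,\dots,a_n$ together with $z^{-1}$, it is a group. In particular $S_n$, being a submonoid of the group $G$, is cancellative; and since every element of $G$ equals $sz^{-k}=z^{-k}s$ for some $s\in S_n$ and some $k\geq0$, $G=S_n\langle z\rangle^{-1}$ is a (left and right) group of fractions of $S_n$.

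It remains to identify $G$ with $F\times C$. Since $G$ is generated by $a_1,\dots,a_n$ and these satisfy the defining relations of $S_n$, there is a surjection onto $G$ from the group $\Gamma$ presented by those relations; conversely the natural homomorphism $S_n\to\Gamma$ sends $z$ to a unit and therefore factors through $G$, and the two homomorphisms so obtained are mutually inverse on generators, so $G\cong\Gamma$. To compute $\Gamma$ I would use the invertible substitution $p_i:=a_1a_2\cdots a_i$ for $1\leq i\leq n$; since $a_i=p_{i-1}^{-1}p_i$ with $p_0=1$, this is a Tietze transformation. In the new generators the $k$-th defining relation $a_1\cdots a_n=a_{k+1}\cdots a_na_1\cdots a_k$ becomes $p_n=p_k^{-1}p_np_k$, that is, $[p_n,p_k]=1$, for $k=1,\dots,n-1$. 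Hence $\Gamma=\langle p_1,\dots,p_n\mid [p_n,p_k]=1 \text{ for } 1\leq k\leq n-1\rangle$, in which $p_n$ is central while $p_1,\dots,p_{n-1}$ are free; thus $\Gamma=\gr(p_1,\dots,p_{n-1})\times\gr(p_n)$, with the first factor free of rank $n-1$ and the second infinite cyclic. Finally $\gr(p_1,\dots,p_{n-1})=\gr(a_1,\dots,a_{n-1})=F$ and $\gr(p_n)=\gr(a_1a_2\cdots a_n)=C$, which yields $G\cong F\times C$.

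The step that I expect to require the most care is the first one, namely verifying that left multiplication by $z$ carries the normal form (\ref{nf}) to a word again of that shape and does so injectively; once $z$-cancellativity is in hand, the rest is formal — standard properties of central Ore localisations in the second step, and an elementary Tietze computation in the third.
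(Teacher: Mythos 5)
Your proof is correct. Its skeleton coincides with the paper's: use the uniqueness of the normal form of Theorem~\ref{normalform} to show that multiplication by the central element $z=a_1\cdots a_n$ is injective (your computation of $zu$ in the two cases $\varepsilon=0,1$ and the disjointness of the two output families is precisely the computation the paper performs), then pass to the central localization $S_n\langle z\rangle^{-1}$ and identify it. You deviate in two places, both defensible and arguably cleaner. For cancellativity, the paper argues directly: from $w_1a_m=w_2a_m$ it completes $a_m$ on the right to a cyclic conjugate of $z$ to get $w_1z=w_2z$, concludes right cancellativity from the normal form, and then upgrades to two-sided cancellativity using that every element right-divides a power of $z$; you instead prove only that $z$ is cancellable, embed $S_n$ into the localization, note that each $a_i$ is invertible there since $a_iv_i=v_ia_i=z$, and obtain cancellativity of $S_n$ for free from the group embedding. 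For the isomorphism $G\cong F\times C$, the paper constructs explicit mutually inverse homomorphisms using the presentation $\langle x_1,\ldots,x_n\mid x_1\cdots x_n=1\rangle$ of the free group of rank $n-1$, with $\phi(a_i)=x_i$ for $i<n$ and $\phi(a_n)=x_nc$; you instead identify $G$ with the group defined by the same presentation as $S_n$ and apply the Tietze substitution $p_i=a_1\cdots a_i$, which converts the cyclic-shift relations into $[p_n,p_k]=1$ for $k=1,\dots,n-1$. Both are standard, but your route makes the internal direct product decomposition $G=\gr(a_1,\ldots,a_{n-1})\times\gr(z)$ and the freeness of the first factor completely transparent.
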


\begin{proof}
Let $w_{k}=a_1^{i_{k}}(a_1a_2\cdots
a_n)^{\varepsilon_{k}}(a_2\cdots a_n)^{j_{k}}b_{k},
k=1,2$, be two elements of $S_{n}$ written in the
canonical form established in
Theorem~\ref{normalform}. Suppose that
$w_{1}a_{m}=w_{2}a_{m}$ for some $m\in \{1,\ldots
,n\}$. Then $w_{1}(a_{1}\cdots
a_{n})=w_{2}(a_{1}\cdots a_{n})$. On the other hand,
we have $$w_{k}(a_{1}\cdots
a_{n})=a_{1}^{i_{k}}(a_{1}\cdots a_{n})(a_{2}\cdots
a_{n})^{j_{k}}b_k$$ if $\varepsilon_{k}=0$ (and then
$i_{k}=0$ or $j_{k}=0$) and $$w_{k}(a_{1}\cdots
a_{n})=a_{1}^{i_{k}+1}(a_{1}\cdots
a_{n})(a_{2}\cdots a_{n})^{j_{k}+1}b_k$$ if
$\varepsilon_{k}=1$. Therefore, if $i_{1}=0$ or
$j_{1}=0$ then $i_{2}=0$ or $j_{2}=0$ and
$$a_{1}^{i_{1}}(a_{1}\cdots a_{n})(a_{2}\cdots
a_{n})^{j_{1}}b_1=a_{1}^{i_{2}}(a_{1}\cdots
a_{n})(a_{2}\cdots a_{n})^{j_{2}}b_2,$$ which
implies that $w_{1}=w_{2}$. If $i_{1}$ and $j_{1}$
are nonzero, then also $i_{2}$ and $j_{2}$ are
nonzero and we get $$a_{1}^{i_{1}+1}(a_{1}\cdots
a_{n})(a_{2}\cdots
a_{n})^{j_{1}+1}b_1=a_{1}^{i_{2}+1}(a_{1}\cdots
a_{n})(a_{2}\cdots a_{n})^{j_{2}+1}b_2.$$ So, again
$w_{1}=w_{2}$. It follows that $S_{n}$ is right
cancellative.

It is clear that for every $w\in S_{n}$ there exists
$u\in S_{n}$ such that $wu=(a_{1}\cdots a_{n})^{k}$
for some nonnegative integer $k$. Since
$(a_{1}\cdots a_{n})^{k}$ is central, it follows
that  $S_{n}$ is cancellative and that the central
localization $S_{n}\langle a_{1}\cdots a_{n}\rangle
^{-1}$ is the group of fractions of $S_{n}$.

Let $F$ be the group generated by elements
$x_{1},\ldots ,x_{n}$ subject to the relation
$x_{1}\cdots x_{n}=1$. Clearly, $F$ is a free group
of rank $n-1$. Let $C$ be an infinite cyclic group
with a generator $c$. Since $x_{i}\cdots
x_{n}cx_{1}\cdots x_{i-1}=x_{i}\cdots
x_{n}x_{1}\cdots x_{i-1}c= c$ in $F\times C$ for
every $i\in \{2,\ldots ,n\}$, it is clear that the
rules $\phi (a_{i})=x_{i}, i=1,\ldots ,n-1$, $\phi
(a_{n})=x_{n}c$ determine a homomorphism $\phi
:S_{n}\longrightarrow F\times C$. It extends to a
homomorphism $\overline{\phi} :G\longrightarrow
F\times C$. On the other hand, the rules: $\psi
(x_{i})= a_{i}, i=1,\ldots, n-1$, and $\psi
(c)=a_{1}\cdots a_{n}$, determine a homomorphism
$\psi : F\times C\longrightarrow G$. It is easy to
see that this is the inverse of $\overline{\phi}$.
The assertion follows.
\end{proof}

Next we investigate the structure of the semigroup
algebra $K[S_n]$. We start with an easy consequence
of Theorem~\ref{cancel}.

\begin{corollary}
The algebra $K[S_{n}]$ is a domain and it is
semiprimitive.
\end{corollary}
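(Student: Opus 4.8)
The plan is to derive both assertions quickly from Theorem~\ref{cancel}. First, since $S_n$ has a group of fractions $G \cong F \times C$, the monoid $S_n$ embeds in $G$, hence in the group algebra $K[G]$; so $K[S_n]$ embeds in $K[G] \cong K[F \times C] \cong K[F][C] \cong K[F][c,c^{-1}]$, a Laurent polynomial ring over the group algebra of a free group. Thus it suffices to show $K[G]$ is a domain. Since $F$ is a free group, it is a torsion-free group, and by a classical result (for instance, the Kaplansky zero-divisor conjecture is known for free groups, or more directly one invokes that free groups are orderable, hence $K[F]$ is a domain by the standard argument that the group algebra of an orderable group has no zero divisors). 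Then $K[G] = K[F][c,c^{-1}]$ is a domain as a localization of the polynomial ring $K[F][c]$, which is a domain because $K[F]$ is. Therefore $K[S_n] \subseteq K[G]$ is a domain.

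For semiprimitivity I would again pass to $K[G]$ and use that $G \cong F \times C$ with $F$ free. Since $\ch K$ is arbitrary, I cannot simply cite Amitsur's theorem on large fields; instead I would invoke the known fact that the group algebra over any field of a free group is semiprimitive, equivalently that its Jacobson radical is zero. This follows, for example, from the result (due to Formanek, or from Passman's work on group algebras) that $K[F]$ is primitive when $\rank F \ge 2$, hence semiprimitive, and is just $K[x,x^{-1}]$ (semiprimitive) when $\rank F = 1$; the degenerate case $F$ trivial only arises for small $n$ and is again clear. Then $K[G] = K[F][c,c^{-1}]$ is semiprimitive because the Jacobson radical of a Laurent polynomial ring $R[c,c^{-1}]$ over a ring $R$ with $J(R) = 0$ is zero (this is a standard fact: $J(R[c,c^{-1}]) = (J(R) \cap R)[c,c^{-1}]$-type statement, or one argues via $J(R[c]) = N[c]$ for a nil ideal $N$ of $R$ and $R$ semiprimitive without nil ideals when it is a domain).

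Finally I would transfer semiprimitivity from $K[G]$ to $K[S_n]$. This is where a little care is needed: a subalgebra of a semiprimitive algebra need not be semiprimitive in general. However, $K[S_n]$ is not an arbitrary subalgebra — by Theorem~\ref{cancel}, $G = S_n \langle a_1 \cdots a_n \rangle^{-1}$ is a central localization of $S_n$, so $K[G] = K[S_n] (a_1 \cdots a_n)^{-1}$ is a central localization of $K[S_n]$ at the Ore set of powers of the central nonzerodivisor $z = a_1 \cdots a_n$. For a central localization one has control over radicals: $J(K[S_n]) \cdot K[G] \subseteq J(K[G]) = 0$, and since localization at a central nonzerodivisor is injective on $K[S_n]$, this forces $J(K[S_n]) = 0$. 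I expect this last transfer step to be the main obstacle to state cleanly — one must make sure the radical behaves well under this particular localization; the cleanest route is to note that $K[S_n]$ is a $\mathbb{Z}_{\ge 0}$-graded algebra (by total degree) with $K[S_n]_0 = K$ a field, so $J(K[S_n])$ is a graded ideal, and a graded ideal that dies in the localization at the central homogeneous element $z$ must be zero since multiplication by $z$ is injective on the domain $K[S_n]$.
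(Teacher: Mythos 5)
Your argument for the domain part is correct, though it takes a different route from the paper: you embed $K[S_n]$ into $K[G]\cong K[F][c,c^{-1}]$ and use orderability of free groups, whereas the paper simply notes that $G\cong F\times C$ is a unique product group, hence $S_n$ is a unique product monoid, and quotes Theorems~10.4, 10.6 and Corollary~10.5 of Okni\'nski's \emph{Semigroup Algebras} for \emph{both} conclusions at once. The genuine gap is in your semiprimitivity step. The inclusion $\mathcal{J}(K[S_n])\cdot K[G]\subseteq \mathcal{J}(K[G])$ that you invoke as ``control over radicals for a central localization'' is false as a general principle: localizing $\mathbb{Z}_{(p)}$ at the powers of the central regular non-unit $p$ gives $\mathbb{Q}$, yet $\mathcal{J}(\mathbb{Z}_{(p)})=p\mathbb{Z}_{(p)}\neq 0$ while $\mathcal{J}(\mathbb{Q})=0$; the Jacobson radical of a ring need not land in the Jacobson radical of its central localization. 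Your graded remark does not repair this, because homogeneity of $\mathcal{J}(K[S_n])$ is only used there to conclude that an ideal killed by the localization map is zero --- but injectivity of $K[S_n]\to K[G]$ was never in doubt once $K[S_n]$ is a domain; what is missing is precisely the claim that $\mathcal{J}(K[S_n])$ \emph{is} killed, i.e.\ maps into $\mathcal{J}(K[G])=0$.

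The gap is repairable, and your grading observation is in fact the right tool if used directly rather than through the localization. Since the defining relations are homogeneous, $K[S_n]=\bigoplus_{d\geq 0}K[S_n]_d$ with $K[S_n]_0=K$, and by Bergman's theorem the Jacobson radical of a $\mathbb{Z}$-graded ring is a homogeneous ideal. A homogeneous $a\in\mathcal{J}(K[S_n])$ of degree $d>0$ has $1-a$ invertible, and comparing homogeneous components of $(1-a)u=1$ gives $u_0=1$ and $u_{md}=a^{m}$ for all $m$, so finiteness of the support of $u$ forces $a$ to be nilpotent, hence $a=0$ by the domain property already established; a nonzero element of $\mathcal{J}$ of degree $0$ would be a unit of $K$, which is impossible. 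Note that Bergman's homogeneity theorem (or, over an infinite field, the standard scaling argument) is an essential outside input here. Alternatively, follow the paper's one-line route through unique product monoids.
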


\begin{proof}
By Theorem~\ref{cancel}, the group of  fractions of
$S_{n}$ is a unique product group. Hence, $S_{n}$ is
a unique product monoid. Therefore the assertions
follow from Theorem~10.4, Corollary~10.5 and
Theorem~10.6 in \cite{book}.
\end{proof}

We know from Theorem~\ref{cancel} that $S_{n}$ has a
group of fractions $G$ that is a central
localization. Hence, for a field $K$, the group
algebra $K[G]$ is a central localization of
$K[S_{n}]$. It follows that if $K[S_{n}]$ is a
primitive ring then so is $K[G]$. Well known group
algebra results therefore easily allow us to
determine when $K[S_{n}]$ is primitive.

\begin{theorem} \label{globalprimitive}
Let $K$ be a field. The algebra $K[S_{n}]$ is right
(respectively left) primitive if and only if $K$ is
countable.
\end{theorem}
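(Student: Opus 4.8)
The plan is to reduce the primitivity of $K[S_n]$ to the primitivity of the group algebra $K[G]$ and then invoke the classical criterion for primitivity of group algebras of free groups. First I would recall from Theorem~\ref{cancel} that $G \cong F \times C$, where $F$ is free of rank $n-1 \ge 2$ (since we assume $n \ge 3$) and $C$ is infinite cyclic, and that $K[G]$ is a central localization of $K[S_n]$, namely $K[G] = K[S_n]\langle a_1\cdots a_n\rangle^{-1}$. Since a localization of a primitive ring at a central multiplicative set is again primitive (a faithful simple module over $K[S_n]$ on which $a_1\cdots a_n$ acts invertibly — which it must, being central and nonzero in a domain acting on a faithful module — extends to a faithful simple $K[G]$-module), the primitivity of $K[S_n]$ forces that of $K[G]$. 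So one direction reads: if $K[S_n]$ is primitive, then $K[G]$ is primitive.

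For the converse, and to pin down the countability condition, I would use the known structure of $K[F\times C] \cong K[C][F] = (K[c,c^{-1}])[F]$, the group algebra of the free group $F$ over the Laurent polynomial ring $K[c,c^{-1}]$. The relevant input is the Formanek–Snider type result (see Passman's book on group rings): the group algebra $L[F]$ of a nonabelian free group $F$ over a field (or suitable domain) $L$ is primitive if and only if $|L[F]| \le |L|^{\aleph_0}$-type cardinality bounds hold — concretely, $K[F]$ for $F$ free nonabelian is primitive precisely when $K$ is countable, and more generally $L[F]$ is primitive when $L$ is a countable domain. Since $K[c,c^{-1}]$ is countable exactly when $K$ is countable, one gets that $K[G] = K[c,c^{-1}][F]$ is primitive when $K$ is countable. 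Then I would transfer primitivity back down: if $K[G]$ is primitive and $K[S_n] \subseteq K[G]$ is an order with $K[G]$ a central localization, a faithful simple $K[G]$-module restricts to a faithful simple $K[S_n]$-module (faithfulness is clear since $K[S_n]$ embeds in $K[G]$; simplicity is retained because $K[G] = K[S_n] \otimes_{K[z]} K[z,z^{-1}]$ with $z = a_1\cdots a_n$ central, so submodules correspond). The left-right symmetry is handled by the analogous group-algebra statement, which is itself left-right symmetric.

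The main obstacle is making the cardinality bookkeeping precise and citing the correct form of the group-algebra primitivity theorem. The cleanest route is to observe that $K[G] \cong K[F] \otimes_K K[C]$ is a central localization of $K[F]$ as well (inverting the central element $c$ inside $K[F][c]$), so $K[S_n]$ is primitive iff $K[G]$ is iff $K[F]$ is, and then apply the sharp result that $K[F]$ for $F$ free of rank $\ge 2$ is primitive if and only if $\dim_K K[F] \le \aleph_0$ on the appropriate side, i.e. if and only if $K$ is countable; this is exactly Theorem~9.2.11 (and surrounding discussion) in Passman, \emph{The Algebraic Structure of Group Rings}. A secondary technical point is verifying that "primitive" genuinely passes both up to and down from a central Ore localization here; since $S_n$ is cancellative with $G$ its group of fractions and the localization is at the central submonoid $\langle a_1\cdots a_n\rangle$, this is standard, but it must be stated. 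I would therefore organize the proof as: (1) $K[S_n]$ primitive $\Leftrightarrow$ $K[G]$ primitive via central localization; (2) $K[G]$ primitive $\Leftrightarrow$ $K[F]$ primitive, again via central localization (inverting $c$); (3) $K[F]$ primitive $\Leftrightarrow$ $K$ countable, by the cited group-ring theorem together with $\operatorname{rank} F = n-1 \ge 2$; and note each step is left-right symmetric.
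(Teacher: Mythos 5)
Your opening reduction --- $K[S_{n}]$ right primitive implies $K[G]$ right primitive, since a faithful simple module over the domain $K[S_{n}]$ extends along the central localization at $a_{1}\cdots a_{n}$ --- is sound and agrees with the paper. The serious gap is in your descent step for the converse: a faithful simple $K[G]$-module does \emph{not} restrict to a simple $K[S_{n}]$-module. For a central localization $R\subseteq RS^{-1}$ the correspondence of submodules goes the wrong way: an $R$-submodule of an $RS^{-1}$-module is an $RS^{-1}$-submodule only if it is $S$-saturated, so ``submodules correspond'' is exactly backwards. The standard counterexample is $K[x]\subset K(x)$: the field $K(x)$ is faithful and simple over itself, but over $K[x]$ it is faithful and far from simple, and indeed $K[x]$ is not primitive although its localization is. Primitivity does not pass down a central localization, and this is precisely why the paper cannot simply quote the group-algebra result for $K[G]$: instead it reruns Formanek's argument (Passman, Theorem~9.2.10) \emph{inside} $K[S_{n}]$, choosing the bijection $r:\langle x\rangle\longrightarrow K[S_{n}]$ so that the elements $s(a)$ lie in $K[S_{n}]$, and then checking that $J=\sum_{1\neq a\in\langle x\rangle}s(a)K[S_{n}]$ is a proper right ideal of $K[S_{n}]$ whose maximal right over-ideals contain no nonzero two-sided ideal. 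That construction is the real content of the sufficiency direction and is absent from your proposal.

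Your further reduction ``$K[G]$ primitive $\Leftrightarrow$ $K[F]$ primitive'' is also unjustified: $K[G]\cong K[F][c,c^{-1}]$ is a central localization of the polynomial ring $K[F][c]$, not of $K[F]$, and adjoining a central Laurent variable does not preserve primitivity in either direction (compare $K$ with $K[c,c^{-1}]$). Worse, the necessity of countability comes precisely from the central infinite cyclic factor $C$, i.e.\ from the central element $z=a_{1}\cdots a_{n}$: the paper's forward direction applies Passman's Theorem~9.1.6 using that the finite conjugacy centre $\Delta(G)$ is nontrivial (it contains $C$), which forces $K$ countable. A nonabelian free group $F$ has trivial $\Delta$, so no such obstruction is available for $K[F]$, and the purported criterion ``$K[F]$ primitive iff $K$ countable'' is not what Passman's results give (only the sufficiency of countability is Formanek's theorem). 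Discarding the factor $C$ therefore loses exactly the feature that makes countability necessary. In summary: keep your step (1) in the upward direction only, replace the necessity argument by the $\Delta(G)\neq 1$ criterion applied to $K[G]$, and replace the descent in the sufficiency direction by an adaptation of Formanek's construction carried out inside $K[S_{n}]$ itself.
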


\begin{proof} Let $G$ denote the group of
fractions of $S_{n}$. Suppose $K[S_{n}]$ is right
primitive. Then, by the comments before the theorem,
$K[G]$ is right primitive. Because of
Theorem~\ref{cancel}, $G=F\times C$ with $C$ an
infinite cyclic group and $F$ a free group of rank
$n-1\geq 2$. By \cite[Theorem~ 9.1.6]{passman},
since the finite conjugacy center $\Delta (G)$ is
nontrivial, it follows that $K$ is countable.

Conversely, assume $K$ is countable. A well known
result of Formanek \cite{formanek} then states that
$R[A*B]$ is a primitive ring, if $R$ is a
$K$-algebra without zero divisors and $A*B$ is the
free product of two nonidentity groups $A$ and $B$
not both of order $2$ and such that $|R|\leq |A*B|$.
This result is Theorem 9.2.10 in \cite[page
373]{passman}. Its proof relies on fixing a
bijection $r: A \longrightarrow R[A*B]$ (with
$r(1)=0$) and showing that there exists a suitable
map $s:A\longrightarrow R[A*B]$ such that $s(a)$
depends on $r(a)$ and it exhibits the element $a$ in
its structure. The same proof works for the algebra
$K[S_{n}]$. One writes $G=C\times F = C\times (A*B)$
with $A$ an infinite cyclic group with generator $x$
and $B$ a free group of rank $n-2$. Then, since by
assumption $K$ is countable, fix a bijective map
$r:\langle x \rangle \longrightarrow K[S_{n}]$ with
$r(1)=0$. Because this map can be extended to a
bijection $A\longrightarrow K[G]=R[A*B]$ with
$R=K[C]$, all properties of the linear map
constructed in the proof of Theorem~9.2.10 in
\cite{passman}  hold (note that $s(a)\in K[S_{n}]$
for $a\in \langle x \rangle$). Hence, the proof also
yields that $J=\sum_{1\neq a\in \langle x \rangle}
s(a)K[S_{n}]$ is a proper right ideal of $K[S_{n}]$
and every maximal right ideal $M$ of $K[S_{n}]$ such
that $J\subseteq M$ does not contain nonzero
two-sided ideals of $K[S_{n}]$. Therefore,
$K[S_{n}]$ is right primitive. Since $K[S_{n}]$ is
isomorphic with its opposite ring, we get that
$K[S_{n}]$ is primitive.
\end{proof}

Since $K[G]$ is a central localization of
$K[S_{n}]$, we know that there are two types of
prime ideals of $K[S_{n}]$: those coming from $K[G]$
(that is, the primes of  the form $Q\cap K[S_{n}]$
with $Q$  a prime ideal  of $K[G]$) and those
intersecting $S_{n}$ nontrivially. As in the case of
other classes of semigroup algebras, one may expect
that the latter play an important role in the study
of the properties of the algebra, see
\cite{bookspringer}. The first step is to look at
the minimal prime ideals of $S_{n}$ and their impact
on the structure of $K[S_{n}]$.  We will see that
$P=a_1a_2\cdots a_nS_n$ is the only such ideal and
that $K[S_n]/K[P]$ is a (left and right) primitive
ring.

\begin{lemma}\label{prime}
$P=a_{1}a_{2}\cdots a_{n}S_{n}$ is a prime ideal of
$S_n$. Moreover, every prime ideal $Q$ of $K[S_{n}]$
such that $Q\cap S_{n}\neq \emptyset $ contains $P$.
Furthermore, $K[P]$ is a height one prime ideal of
$K[S_{n}]$.
\end{lemma}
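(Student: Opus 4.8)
The plan is to establish the three assertions in turn, exploiting the normal form of Theorem~\ref{normalform} and the description of the group of fractions in Theorem~\ref{cancel}.

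First I would show $P=a_1a_2\cdots a_nS_n$ is a prime ideal of the monoid $S_n$, i.e. that $S_n\setminus P$ is a subsemigroup. By the remark following Theorem~\ref{normalform}, $S_n\setminus P=\{a\in S_n\mid |\pi^{-1}(a)|=1\}$ and we have the natural isomorphism $S_n/P\cong FM_n/\pi^{-1}(P)$, where $\pi^{-1}(P)=I$ is the ideal of $FM_n$ generated by the words $a_{\sigma(1)}\cdots a_{\sigma(n)}$, $\sigma=(1,2,\dots,n)^k$. So it suffices to check that $FM_n\setminus I$ is closed under multiplication, i.e. that if two words $u,v\in FM_n$ do not contain any cyclic permutation of $a_1a_2\cdots a_n$ as a subword, then neither does $uv$. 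Since $u$ and $v$ are individually free of such subwords, any new occurrence of a length-$n$ cyclic-shift subword in $uv$ must straddle the junction; writing such a subword as $a_{i+1}\cdots a_na_1\cdots a_i$ (for some $0\le i\le n-1$) and splitting it across the cut forces a suffix of $u$ of the form $a_{i+1}\cdots a_na_1\cdots a_t$ and a prefix of $v$ of the form $a_{t+1}\cdots a_i$; but then the full word $a_{i+1}\cdots a_n a_1\cdots a_i$ has strictly increasing-then-wrapping index pattern, and tracing the indices shows one of $u,v$ already contains a forbidden subword or the straddling piece is shorter than $n$ — a short combinatorial case analysis. Hence $P$ is prime in $S_n$.

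Second, for a prime ideal $Q$ of $K[S_n]$ with $Q\cap S_n\ne\emptyset$, I would show $P\subseteq Q$. Pick $s\in Q\cap S_n$. Every element of $S_n$ divides a power of the central element $z=a_1a_2\cdots a_n$: as noted in the proof of Theorem~\ref{cancel}, for each $w\in S_n$ there is $u\in S_n$ with $wu=z^k$. Applying this to $s$ gives $z^k=su\in Q$ for some $k\ge 1$; since $z$ is central, $z^kK[S_n]$ is a two-sided ideal, and primeness of $Q$ forces $z\in Q$, whence $K[P]=zK[S_n]\subseteq Q$, i.e. $P\subseteq Q$.

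Third, I would show $K[P]$ is a height-one prime ideal of $K[S_n]$. That $K[P]=zK[S_n]$ is prime follows because $K[S_n]/K[P]\cong K[S_n/P]\cong K[FM_n/I]$ is the contracted semigroup algebra of the monomial semigroup $FM_n/I$, which is prime: indeed $FM_n/I$ is the monoid of words avoiding all cyclic shifts of $a_1\cdots a_n$, and this is a prime (in fact, as the paragraph after the lemma claims, primitive) monomial algebra — alternatively primeness of $P$ in $S_n$ together with $S_n/P$ having no nonzero nilpotents of the relevant form gives it directly, or one invokes the standard criterion that $K[S/P]$ is prime when $P$ is a prime ideal of a cancellative-modulo-$P$ monoid. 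For height one: if $Q\subsetneq K[P]$ is a nonzero prime, then $Q$ is a prime of $K[S_n]$; either $Q\cap S_n\ne\emptyset$, in which case the second assertion gives $P\subseteq Q$, contradicting $Q\subsetneq K[P]$ unless $Q=K[P]$; or $Q\cap S_n=\emptyset$, in which case $Q$ extends to a prime $\overline Q$ of the central localization $K[G]$ with $\overline Q\cap K[S_n]=Q$. But $z$ is a unit in $K[G]$, so $z\notin\overline Q$ while $z\in K[P]$; thus $Q=\overline Q\cap K[S_n]$ cannot contain a nonzero element expressible via $z$, and one checks $Q\subsetneq K[P]$ is impossible since localizing at $z$ kills $K[P]$ — more precisely, $K[P]$ localizes to all of $K[G]$, so there is no prime of $K[G]$ strictly below it, and contraction preserves strict inclusions among primes not meeting $S_n$. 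Hence no prime lies strictly between $0$ and $K[P]$, i.e. $K[P]$ has height one.

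The main obstacle is the combinatorial verification in the first part — that the complement of $I$ in $FM_n$ is multiplicatively closed — since one must rule out junction-straddling occurrences of every cyclic shift; the index-tracking argument is elementary but needs care to handle the wrap-around at $a_n\,a_1$. The localization bookkeeping in the height-one argument is routine given Theorem~\ref{cancel}, and the monoid-primeness reduction is standard.
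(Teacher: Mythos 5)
The first part of your argument contains a genuine error. You propose to prove that $P$ is prime by showing that $S_n\setminus P$ is a subsemigroup, equivalently that the complement of $I=\pi^{-1}(P)$ in $FM_n$ is multiplicatively closed. That statement is false: the words $a_2a_3\cdots a_n$ and $a_1$ contain no cyclic shift of $a_1a_2\cdots a_n$ as a factor (and have length $<n$, so neither lies in $P$), yet $a_2\cdots a_n\cdot a_1=a_1a_2\cdots a_n\in P$ by the very first defining relation. What you are trying to prove is that $P$ is \emph{completely} prime, which it is not; the lemma asserts (and the application of \cite[Proposition~24.2]{book} requires) only the usual semigroup-theoretic primeness, namely $aS_nb\subseteq P$ implies $a\in P$ or $b\in P$. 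The paper proves this by noting that for $a\notin P$ there is at most one generator $a_j$ with $aa_j\in P$ (only one letter can complete the length-$(n-1)$ suffix of the word representing $a$ to a cyclic shift of $a_1\cdots a_n$), and symmetrically for $b$; since $n\geq 3$ one can choose $a_i$ with $aa_i\notin P$ and $a_ib\notin P$, and then $aa_i^2b\notin P$ because a forbidden factor has pairwise distinct letters and so cannot straddle the repeated block $a_ia_i$. Your junction-straddling case analysis would, if carried out, discover the counterexample above rather than establish the claim.

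Your second assertion is proved essentially as in the paper and is fine. For the height-one claim, the case $Q\cap S_n\neq\emptyset$ is correct, but the case $Q\cap S_n=\emptyset$ is too vague to verify: ``$K[P]$ localizes to all of $K[G]$, so there is no prime of $K[G]$ strictly below it'' gives no contradiction, since every proper prime of $K[G]$ lies below $K[G]$. The missing step is the one the paper supplies: if $0\neq Q\subseteq K[P]=zK[S_n]$ with $z=a_1\cdots a_n$ central and $K[S_n]$ a domain, write $Q=zQ_1$ for an ideal $Q_1$; a degree count shows $Q_1\not\subseteq Q$, so primeness of $Q$ forces $z\in Q$, hence $Q=K[P]$ (in particular $Q\cap S_n\neq\emptyset$, so your second case is vacuous). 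Finally, your justification that $K[P]$ is prime either appeals circularly to the remark following the lemma or rests on the primeness of $P$ in $S_n$, which is exactly the point your first part fails to establish.
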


\begin{proof}
Let $a,b\in S_n\setminus P$. By the comment
preceding Theorem~\ref{cancel},  there exists at
most one generator $a_{j}$ such that $aa_{j}\in P$.
Similarly, there exists at most one generator
$a_{k}$ such that $a_{k}b\in P$. Since $n\geq 3$, it
follows that there exists $a_i\in \{ a_1,a_2,\dots
,a_n\}$ such that $aa_i^2b\in S_n\setminus P$ (again
by the comment preceding Theorem~\ref{cancel}).
Hence, $P$ is a prime ideal of $S_{n}$.

Let $Q$ be a prime ideal of $K[S_{n}]$ such that
$Q\cap S_{n}\neq \emptyset $. It easily follows that
$(a_{1}\cdots a_{n})^{k}\in Q$ for some $k\geq 1$.
As $Q$ is a prime ideal and $a_{1}\cdots a_{n}$ is
central, we get $P\subseteq Q$.

The comment before Theorem~\ref{cancel} also shows
that $K[S_n]/K[P]$ is isomorphic to a monomial
algebra. Therefore, \cite[Proposition~24.2]{book}
implies that $K[P]$ is a prime ideal of $K[S_{n}]$.

To prove that $K[P]$ is a height one prime ideal
suppose that $P_{0}$ is a nonzero prime ideal of
$K[S_{n}]$ such that $P_{0}\subseteq K[P]$. Then
$P_{0}=a_{1}\cdots a_{n}P_{1}$ for an ideal $P_{1}$
of $K[S_{n}]$. Since $a_{1}\cdots a_{n}$ is central
and $P_{1}\not \subseteq P_{0}$ (by an easy degree
argument), it follows that $a_{1}\cdots a_{n}\in
P_{0}$. Hence $P_{0}=K[P]$, as desired.
\end{proof}

Note that if $Q$ is a prime ideal of $S_{n}$, then
$K[Q]$ is a prime ideal of $K[S_{n}]$. Indeed, by
the proof of Lemma~\ref{prime}, $P\subseteq Q$ and,
hence, by \cite[Proposition~24.2]{book},
$K[S_n]/K[Q]$ is a prime monomial algebra.

\begin{proposition}
Assume that $Q$ is a finitely generated prime ideal
of $S_n$. Then $K[S_n]/K[Q]$ is either a prime
PI-algebra or a (left and right) primitive ring. In
particular, $K[S_n]/K[P]$ is a (left and right)
primitive ring.
\end{proposition}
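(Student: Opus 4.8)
The plan is to analyze the monomial algebra $K[S_n]/K[Q]$ using the structure of the monoid $S_n/Q$, where $Q$ is a finitely generated prime ideal of $S_n$. By the comment preceding Theorem~\ref{cancel} and the remark following Lemma~\ref{prime}, we know $P \subseteq Q$, so $S_n/Q$ is a homomorphic image of the monoid $S_n/P = FM_n/\pi^{-1}(P)$, which is a monomial monoid (a monoid with zero whose nonzero elements form a set of words closed under taking subwords modulo the zero). Hence $K[S_n]/K[Q]$ is a finitely presented monomial algebra, and one can apply the dichotomy results for such algebras. The key tool is the result of Bell and Pekcagliyan \cite{bell} (see also \cite{okn_monomial}) already invoked for Proposition~\ref{symmetriccase}: a finitely presented monomial algebra either has polynomially bounded growth, in which case it satisfies a polynomial identity, or it has exponential growth, and in the latter case — together with the primeness of the algebra — one deduces primitivity. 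Concretely, I would first record that $K[S_n]/K[Q]$ is prime (from the remark after Lemma~\ref{prime}) and finitely presented monomial, then split into the two growth cases.

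For the first step I would invoke the Bell–Pekcagliyan trichotomy for finitely presented monomial algebras: the Gelfand–Kirillov dimension is either finite (equivalently the algebra has polynomial growth) or the algebra has exponential growth. If the growth is polynomial, then a finitely generated prime monomial algebra of finite GK-dimension is known to satisfy a polynomial identity — this is exactly the kind of statement contained in the monomial-algebra literature cited (\cite{book}, \cite{bellsmokt}, \cite{belov}), and for a prime PI-algebra one is done with that alternative. If the growth is exponential, I would use the second half of the Bell–Pekcagliyan analysis, which says a prime finitely presented monomial algebra of exponential growth is primitive (indeed both left and right primitive, monomial algebras being built symmetrically from a set of words closed under reversal-insensitive subword conditions); alternatively one can exhibit a faithful simple module directly by the Bergman-type construction used for such algebras. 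This yields the desired dichotomy for general finitely generated prime $Q$.

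For the final clause, $Q = P = a_1a_2\cdots a_n S_n$ is finitely generated (generated by the finitely many words $a_{\sigma(1)}\cdots a_{\sigma(n)}$), so the general statement applies; it remains to rule out the PI alternative, i.e.\ to show $K[S_n]/K[P]$ has exponential growth. This is a direct combinatorial check: the nonzero elements of $S_n/P$ are precisely the words in $a_1,\dots,a_n$ that do not contain any cyclic rotation $a_{i+1}\cdots a_n a_1\cdots a_i$ of $a_1\cdots a_n$ as a subword, and one easily produces exponentially many such words of each length $m$ — for instance all words avoiding the letter $a_1$ entirely (there are $(n-1)^m$ of them, and none contains a full cyclic block since every such block uses $a_1$), so the growth function is at least $(n-1)^m$, hence exponential since $n \geq 3$. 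Therefore the primitive alternative must hold for $K[S_n]/K[P]$.

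The main obstacle is making precise and citing correctly the exact form of the Bell–Pekcagliyan dichotomy — i.e.\ that a \emph{prime} finitely presented monomial algebra of exponential growth is (two-sided) primitive, and that in the polynomial-growth case primeness forces PI. The underlying combinatorics (that $S_n/Q$ is monomial with $P \subseteq Q$, and that $K[S_n]/K[P]$ has exponential growth) is routine given the normal form in Theorem~\ref{normalform} and the isomorphism $S_n/(a_1\cdots a_n S_n) \cong FM_n/\pi^{-1}(a_1\cdots a_n S_n)$; the substance lies entirely in transporting the monomial-algebra structure theorem, so I would be careful to state it as a black box with the precise reference before applying it.
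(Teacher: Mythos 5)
Your argument follows the paper's proof essentially verbatim: both reduce to the observation that $K[S_n]/K[Q]$ is a prime finitely presented monomial algebra (via the remark after Lemma~2.5 that $P\subseteq Q$) and then invoke the dichotomy of \cite[Theorem~1.2]{bell}, ruling out the PI alternative for $Q=P$ by means of the copy of the free monoid on $a_2,\dots ,a_n$ inside $S_n/P$. The only cosmetic difference is that you phrase the last step via exponential growth (which additionally requires the standard fact that finitely generated PI-algebras have polynomial growth), whereas the paper uses the same free submonoid directly to exhibit a free subalgebra of rank $n-1\geq 2$ and hence the failure of any polynomial identity.
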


\begin{proof}
 As mentioned above, $K[S_n]/K[Q]$ is a prime finitely
presented monomial algebra. Hence, by
\cite[Theorem~1.2]{bell}, $K[S_n]/K[P]$ is either
primitive or PI.

 Since $n\geq 3$ and
$S_n/P$ contains a free monoid of rank $n-1$, it is
clear that $K[S_n]/K[P]$ does not satisfy any
polynomial identity. Therefore $K[S_n]/K[P]$ is
primitive.
\end{proof}

\section{General case and open problems}

In this section we propose some general problems
concerning the group $G_{n}(H)$ and the algebra
$K[S_n(H)]$, for an arbitrary subgroup $H$ of
$\Sym_{n}$ with $n\geq 3$. To simplify notation,
throughout this section  we put
 \begin{eqnarray}
 M&=&\langle a_1,a_2,\dots ,
a_n \mid a_1a_2\cdots
a_n=a_{\sigma(1)}a_{\sigma(2)}\cdots a_{\sigma(n)},
\;  \sigma\in H\rangle . \label{monoidpresentation}
\end{eqnarray}
In order to motivate some of the problems, we first
describe the structure of the semigroup algebra
$K[S_{n}(\Sym_{n})]$. This is another extreme case,
beyond the case studied in the previous section.

Let $\rho $ be the least cancellative congruence on
$M$. As in \cite{book}, we put $$I(\rho)=\lin_K\{
s-t\mid s,t\in M \mbox{ and } s\rho t\} ,$$ the
kernel of the natural epimorphism
$K[M]\longrightarrow K[M/\rho ]$. Note that, if
$z=a_{1}\cdots a_{n}$ is a central element of $M$,
then $\rho $ can be described by the condition:
$s\rho t$ if and only if there exists a nonnegative
integer $i$ such that $sz^{i}=tz^{i}$.

\begin{proposition} \label{symmetriccase}
Let $M=S_{n}(\Sym_{n})$. The algebra $K[M]$ is the
subdirect product of the commutative polynomial
algebra $K[a_{1},\ldots ,a_{n}]\cong K[M/\rho]$ and
the primitive monomial algebra $K[M]/K[Mz]$.
\end{proposition}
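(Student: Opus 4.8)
The plan is to show that the two natural epimorphisms out of $K[M]$, namely $K[M]\to K[M/\rho]$ with kernel $I(\rho)$ and $K[M]\to K[M]/K[Mz]$, have intersection of kernels equal to zero, and then to identify the two target algebras. First I would establish that $z=a_1a_2\cdots a_n$ is central in $M=S_n(\Sym_n)$; this follows just as in the cyclic case treated before the statement of Theorem~\ref{normalform}, since for each generator $a_i$ the word $za_i$ can be rewritten, using a permutation relation, as $a_i z$ (conjugating the product $a_1\cdots a_n$ by moving $a_i$ to the front is realized by an element of $\Sym_n$). Once $z$ is central, $\rho$ is exactly the congruence $s\,\rho\,t \iff sz^i=tz^i$ for some $i\ge 0$, as noted in the excerpt, so $I(\rho)=\lin_K\{s-t : sz^i=tz^i \text{ for some } i\}$.

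Next I would identify $K[M/\rho]$. Since $M/\rho$ is the universal cancellative image, and modulo cancellativity all the relations $a_1a_2\cdots a_n=a_{\sigma(1)}\cdots a_{\sigma(n)}$ force the generators to commute pairwise (from $a_1\cdots a_n = a_{\sigma(1)}\cdots a_{\sigma(n)}$ for all transpositions $\sigma$, cancelling common prefixes/suffixes yields $a_ia_{i+1}=a_{i+1}a_i$), one gets that $M/\rho$ is the free abelian monoid on $a_1,\dots,a_n$, hence $K[M/\rho]\cong K[a_1,\dots,a_n]$, the commutative polynomial algebra in $n$ variables. The other quotient $K[M]/K[Mz]$ is a monomial algebra: $Mz$ is an ideal of $M$, so $K[Mz]$ is spanned by a set of monomials and the quotient is spanned by the images of the monomials in $M\setminus Mz$, with the only additional relations being that certain products become $0$. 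That it is \emph{primitive} should follow along the lines already used in the paper for $K[S_n]/K[P]$: the monomial algebra is finitely presented and prime (by \cite[Proposition~24.2]{book}, using that $Mz$ contains the relevant prime ideal), it is not PI because $M\setminus Mz$ contains enough independent monomials, and then \cite[Theorem~1.2]{bell} forces primitivity.

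Finally, for the subdirect product claim I must show $I(\rho)\cap K[Mz]=0$. An element of $I(\rho)$ is a $K$-linear combination of differences $s-t$ with $sz^i=tz^i$; by cancellativity arising from centrality of $z$ (one can localize at $\langle z\rangle$), $sz^i=tz^i$ already in the localization forces a controlled relation. The key point is that an element of $K[M]$ lying in $K[Mz]$ is supported on $Mz$, while the ``new'' content of $I(\rho)$ — the part not already visible modulo $K[Mz]$ — lives outside $Mz$; more precisely, I would argue that if $\alpha\in I(\rho)$ is supported entirely on $Mz$, then writing $\alpha=z\beta$ (legitimate since $z$ is central and a non-zero-divisor, so $K[M]$ embeds in its central localization $K[M]\langle z\rangle^{-1}$) gives $\beta\in I(\rho)$ as well, and iterating this descent together with a degree/length argument (the defining relations are homogeneous) forces $\alpha=0$. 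The main obstacle I expect is precisely this last step: controlling $I(\rho)$ concretely enough to run the descent, i.e. showing that the only way a relation $s-t$ with $sz^i=tz^i$ can contribute to $K[Mz]$ is by being itself divisible by $z$. This requires understanding the structure of $M=S_n(\Sym_n)$ and its cancellative congruence well enough — in particular that $M$ embeds in $K[a_1,\dots,a_n]\times (M/(Mz)$-monomials$)$ on the nose — which is really the content of the proposition and where I would invest the most care, likely by first pinning down $M\setminus Mz$ explicitly (analogous to the normal form in Theorem~\ref{normalform}).
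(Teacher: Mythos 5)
Your overall architecture matches the paper's: realize $K[M]$ as a subdirect product via the two surjections onto $K[M/\rho]$ and $K[M]/K[Mz]$, identify the first quotient with the polynomial ring and the second with a prime, non-PI, finitely presented monomial algebra, hence primitive by the Bell--Pekcagliyan theorem. Those identifications are essentially right, with one small caveat: cancelling common prefixes and suffixes in the relation attached to a transposition $\sigma$ only produces $a_ia_{i+1}=a_{i+1}a_i$ when $\sigma$ is an \emph{adjacent} transposition, and the adjacent commutation relations alone do not present the free abelian monoid; you need a short extra step (for instance induction on $|i-j|$ using already-derived commutations, or the identity below) to get $a_ia_j=a_ja_i$ for all pairs in every cancellative quotient.

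The genuine gap is exactly where you flag it: the proof that $I(\rho)\cap K[Mz]=0$. Your proposed descent (write $\alpha=z\beta$, show $\beta\in I(\rho)$, iterate) does not close, because after one step there is no reason for $\beta$ to lie in $K[Mz]$ again, so the iteration cannot continue; and the fallback of computing a normal form for $S_n(\Sym_n)$ is not carried out. The paper sidesteps all of this with a single identity in $M$: for $i\neq j$, $za_ia_j=za_ja_i$. Indeed, writing $z=a_ia_jv$ with $v$ the product of the remaining $n-2$ generators, one has $za_ja_i=a_ia_j(va_ja_i)=a_ia_j(va_ia_j)=za_ia_j$, since $va_ja_i$ and $va_ia_j$ are both permutation words. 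Letting $P$ denote the commutator ideal of $K[M]$, this identity gives $K[Mz]P=0$, and, because $z$ is central and $M/\rho$ is cancellative, it also gives $P\subseteq I(\rho)$; combined with the cancellativity of the free abelian monoid this yields $P=I(\rho)$ and $K[M/\rho]\cong K[a_1,\ldots,a_n]$. Now if $(zs,zt)\in\rho$ then $sz^{i}=tz^{i}$ for some $i$, so $s-t\in P$ because $K[M]/P$ is a polynomial ring (a domain), and therefore $zs-zt=z(s-t)=0$. Hence each $\rho$-class meets $Mz$ in at most one element, and $K[Mz]\cap I(\rho)=0$ follows at once --- no normal form and no descent are needed. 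This identity is the one idea missing from your proposal.
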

\begin{proof} Because $M$ is a homomorphic image of $S_{n}(\gr (\{ (1\, ,2\, , \ldots
,n)\} ))$, from Section~\ref{example} we know that
$z=a_{1}\cdots a_{n}\in K[M]$ is central. It easily
is verified that $zM$ is a prime ideal of $M$ and
thus, by \cite[Proposition~24.2]{book}, $K[M]/K[zM]$
is a prime ring. Clearly, the commutator ideal $P$
of $K[M]$ is a prime ideal with
$K[M]/P=K[a_{1},\ldots , a_{n}]$ and
$z(a_{i}a_{j}-a_{j}a_{i})=0$ for all $i\neq j$.
Hence $K[Mz]P=0$ and $P\subseteq I(\rho )$. But then
for $(zs,zt)\in \rho$, with $s,t\in M$, we get that
$s-t\in P$. Hence $zs-zt=z(s-t)=0$ and thus
$zs-zt=0$. It follows that $K[Mz]\cap P\subseteq
K[Mz]\cap I(\rho )= 0$. So, $K[M]$ is semiprime and
it  is the subdirect product of $K[a_{1},\ldots ,
a_{n}]$ and the prime ring $K[M]/K[Mz]$. From
Section~\ref{example} we know that the latter is
isomorphic to a monomial algebra. Since $n\geq 3$
and $M/Mz$ contains a free monoid of rank $n-1$, it
is clear that $K[M]/K[Mz]$ does not satisfy a
polynomial identity. Hence, by
\cite[Theorem~1.2]{bell}, $K[M]/K[Mz]$ is primitive.
\end{proof}

It follows that the study of $K[S_{n}(\Sym_{n})]$ is
in some sense reduced to a monomial algebra and an
algebra of a cancellative semigroup, which in this
case is free abelian. At the group level, the
intermediate cases between the case studied in
Section~\ref{example} and the case of
Proposition~\ref{symmetriccase} lead to a certain
hierarchy. Namely, let $H_{0}$ be the cyclic
subgroup of $\Sym_{n}$ generated by the cycle $(1,\;
2, \ldots , \; n)$ and let $H$ be a subgroup of
$\Sym_{n}$ that contains $H_{0}$. We have natural
monoid epimorphisms $$S_n(H_{0})\longrightarrow
S_n(H)\longrightarrow S_n(\Sym_n)$$ and group
epimorphisms
 $$G_n(H_{0})\longrightarrow
G_n(H)\longrightarrow G_n(\Sym_n).$$ We know from
Theorem~\ref{cancel} that $G_n(H_{0})=F_{n-1}\times
\mathbb{Z}$, with $F_{n-1}=\gr (a_{1},\ldots ,
a_{n-1})$ a free group of rank $n-1$, and clearly we
also have that $G_n(\Sym_n)\cong\mathbb{Z}^n$. We
claim that $$G_n(H)=F\times \mathbb{Z},$$ for a
group $F$ such that the above maps yield maps
$$F_{n-1}\longrightarrow F \longrightarrow
\mathbb{Z}^{n-1},$$ where $F$ is a group defined by
a presentation of type (\ref{present}) on $F_{n-1}$.
Indeed, consider any of the defining relations
$a_1a_2\cdots a_n=a_{\tau(1)}a_{\tau(2)}\cdots
a_{\tau(n)}$ (with $\tau \in H$)  for $S_n(H)$. Let
$k$ be such that $\tau (k)=1$ and let
$z=a_1a_2\cdots a_n$. Then, in $G_n(H)$, we have
$$\overline{a}_{1}=\overline{z}\,\overline{a}_{n}^{-1}\cdots
\overline{a}_{2}^{-1},\,
\overline{z}=\overline{a}_{\tau(1)}\cdots
\overline{a}_{\tau(k-1)}\overline{z}\,
\overline{a}_{n}^{-1}\cdots
\overline{a}_{2}^{-1}\overline{a}_{\tau(k+1)}\cdots
\overline{a}_{\tau(n)}$$ and, since $\overline{z}$
is central, the original relation is equivalent to
the relation
\begin{eqnarray} \label{second}
\overline{a}_{2}\cdots \overline{a}_{n}
=\overline{a}_{\tau(k+1)}\cdots
\overline{a}_{\tau(n)}\overline{a}_{\tau(1)}\cdots
\overline{a}_{\tau(k-1)}. \end{eqnarray} Hence $F$
is defined by all such relations with $\tau$ running
over $H$. Note that, if $\sigma=(1,2,\dots ,n)$ then
$\tau \sigma^{i}\in H_{1}=\{ \chi \in H \mid \chi
(1)=1\}$ for some $i$ and the relation $a_1a_2\cdots
a_n=a_{\tau
\sigma^{i}(1)}a_{\tau\sigma^{i}(2)}\cdots
a_{\tau\sigma^{i}}(n)$ also leads to (\ref{second}).
Therefore $F$ is the group defined with $n-1$
generators and a system of relations as in
(\ref{present}) with $H_{1}$ identified with a
subgroup of $\Sym_{n-1}$. However, we do no longer
have the assumption that $H_{1}$ contains a cycle of
length $n-1$.

For an arbitrary subgroup $H$ of $\Sym_{n}$, it
remains a challenging problem to determine the
structure of $S_{n}(H)$, $G_{n}(H)$ and of the
algebra $K[S_{n}(H)]$. We propose some concrete
problems to be investigated. For a ring $R$, we
denote by $\mathcal{J}(R)$ its Jacobson radical and
by $\mathcal{B}(R)$ its prime radical.

\begin{enumerate}
\item Describe the structure of $G_{n}(H)$. Is it a residually finite group?

\item  When does every element of $S_{n}(H)$ have a unique canonical
form, as is the case of the monoid $S_{n}(\gr (\{
(1\, ,2\, , \ldots ,n)\} ))$ considered in
Section~\ref{example}?

\item When is $S_{n}(H)$ cancellative and when does it satisfy the Ore
condition?

\item  Does there exist a congruence $\eta$ on $S_{n}(H)$ so that
 $\mathcal{B}(K[S_{n}(H)])=I(\eta)$?

\item Do we have $\mathcal{J}(K[S_{n}(H)])=\mathcal{B}(K[S_{n}(H)])$? Is this ideal
nilpotent? Is it finitely generated?

\item  Describe the minimal prime spectrum of the algebra
$K[M]$.

Problems 4,5 and 6 are motivated by the nice
behavior of the prime ideals and of the radical in
some other interesting classes of finitely presented
algebras defined by homogeneous relations,
\cite{gat-jes-okn,jaszu-okn,jes-okn-Itype}. The
results of this paper might indicate that a possible
approach can be based on the following two
constructions, that are of independent interest: the
algebras $K[M/\rho ]$, where $\rho $ is the least
cancellative congruence on $M$ and the algebras
$K[M]/K[I]$ for an ideal $I$ of $M$, the latter
potentially related to a monomial algebra.
\end{enumerate}

 \vspace{30pt}
 \noindent \begin{tabular}{llllllll}
 F. Ced\'o && E. Jespers  \\
 Departament de Matem\`atiques &&  Department of Mathematics \\
 Universitat Aut\`onoma de Barcelona &&  Vrije Universiteit Brussel  \\
08193 Bellaterra (Barcelona), Spain    && Pleinlaan
2, 1050 Brussel, Belgium \\
 cedo@mat.uab.es && efjesper@vub.ac.be\\
   &&   \\
J. Okni\'nski &&  \\ Institute of Mathematics &&
\\ Warsaw University&& \\ Banacha 2&& \\ 02-097
Warsaw, Poland &&\\ okninski@mimuw.edu.pl&&
\end{tabular}
\end{document}